\jot \setlength{\topmargin}{0.1\topmargin}
\tikzset{>=stealth}
\theoremstyle{plain}
\newtheorem{theorem}{Theorem}[section]
\newtheorem{lemma}{Lemma}
\newtheorem{conjecture}[theorem]{Conjecture}
\theoremstyle{definition}
\newtheorem{remark}[theorem]{Remark}
\title{Local Pl\"ucker formulas for orthogonal groups}
\author{D. O. Degtyarev}
\address{AGHA Laboratory, Moscow Institute of Physics and Technology, Russia}
\email{degtyarev.d.o@gmail.com}
\begin{document}

\maketitle

\begin{abstract}
Local Pl\"ucker formulas relate the vector of metrics on a holomorphic curve in the projective space, induced by the Fubini-Study metrics on projetive spaces via Pl\"ucker embeddings of associated curves, to the corresponding vector of curvatures. Givental noted that the vector of curvatures is expressed in terms of the vector of metrics via Cartan matrix of type $ A_n$ and extended this observation from type $A_n$ to all nonexceptional types. In this paper we show how local Pl\"ucker formulas for special orthogonal groups, i.e. for Cartan matrices of type $ B_n$ and $ D_n$, can be obtained by reduction to the classical $A_n$ case.
\end{abstract}

\section{Introduction}

\subsection{Classical case}
Consider a holomorphic curve $S$ and a nondegenerate map $f =  f_1: S \to \mathbb{P}^n $. Define a local lift of $f_1$ in a neighborhood $U$ of any point $ p \in U \subset S $ by a holomorphic vector-valued function $ v(z) = (v_0 (z), \dots, v_n (z)): U\rightarrow \mathbb{C}^{n+1} $ and define the $ k $ -th ($ 1 \leqslant k \leqslant n $):
$$
f_k: U \to Gr(k,n+1)\subset \mathbb{P}\left(\bigwedge^{k}\mathbb{C}^{n+1}\right)
$$
by
$$
f_k(z) = [v(z)\wedge v'(z)\ldots v^{(k)}(z)].
$$

 Fubini-Study metric ($FS$) on projective space $ \mathbb{P}(\bigwedge^{k + 1} \mathbb{C}^{n + 1}) $ induces Hermitian metric on the $k$-th associated curve and the corresponding curvature forms via the Pl\"ucker embedding of $Gr(k,n+1)$. In both cases they are $(1,1)$ - forms on the associated curve. We will denote by $ \varphi_k$ the associated $(1,1)$-form of the metric on  $S$, and by $ \theta_k $ the curvature form corresponding to the metric $ \varphi_k $. The classical local Pl\"ucker formulas express the vector of curvatures $ \boldsymbol{\vartheta}: = (\vartheta_k) $ in terms of the vector of metrics $ \boldsymbol{\varphi}: = (\varphi_k) $ using the Cartan matrix $ C $ of the $ A_n $ series in the following form
\begin{equation}\label{classical plucker}
\boldsymbol{\vartheta} = C \boldsymbol{\phi}.
\end{equation}

\subsection{Local Pl\"ucker formulas according to Givental}
One can also construct another vector of associated $(1,1)$-forms of the metrics induced by pullback of the Fubini-Study metrics and the corresponding curvatures via the following morphisms of the associated curves to projective spaces.  We will follow the treatment from \cite{Giv}, \cite{Pos}.

Let $f:S \to F $ be a holomorphic curve on the flag variety $F: = G/B $ where $G$ is a complex semisimple Lie group of rank $n$, and $B$ is a Borel subgroup. We denote the tangent space to the point $ B \in F $ by $ T_{B} F $. Note that $T_{B} F$ is canonically isomorphic to $ \mathfrak{g}/ \mathfrak{b} $ where $ \mathfrak{g} $, $ \mathfrak{b} $ are Lie algebras of the groups $G$ and $B$ respectively. Consider the $n$-dimensional distribution $ \mathcal{N} $ on the flag variety $F$:
\begin{equation}
\mathcal{N}(B) = \{x \in \mathfrak{g} | [x,[\mathfrak{b},\mathfrak{b}]]\subset \mathfrak{b}\}/\mathfrak{b} \subset T_BF. 
\end{equation}

Let us now begin by considering the collection of line bundles $\mathscr{L}_1,\dots, \mathscr{L}_n $ on $F$ such that each $\mathscr{L}_i$ corresponds to the fundamental characters $\chi_i$ of the group $B$. The space $ \Gamma (F, \mathscr{L} _i)^{\vee} $ is the fundamental representation $V_i$ of the group $ G $. With the line bundle $ \mathscr{L}_i $ we can associate the projective morphism $ \pi_i $:
\begin{equation}
\pi_i: G/B \to \mathbb{P}(V_i)
\end{equation} 
$$
[g] \mapsto [g\cdot v_i], 
$$
where $v_i$ is highest weight vector of the representation  $V_i$ and $ g \in G $.

The representation $ V_i $ is irreducible, and therefore it has a Hermitian inner product invariant with respect to the action of the compact form $ K $ of the group $ G $. Note that a Hermitian inner product is uniquely defined up to multiplication
by a constant scalar factor. The Hermitian form determines the Fubini-Study metric $FS$ on the space $\mathbb{P}(V_i)$. The induced metric on the original curve $ S $ is denoted by $ \varphi_i = f^* \pi_i^*(FS) $ , and we write $ \theta_i $ for the corresponding curvature form. Givental conjectured in \cite{Giv}:

\begin{conjecture}\label{Giv theorem}
	If the holomorphic curve $ f: S \to F $ is an integral curve of the distribution $ \mathcal{N} $, then the vector $ \boldsymbol {\theta} $ of curvature forms is expressed in terms of the vector of metrics $ \boldsymbol{\varphi} $ using the Cartan matrix $ C $ of the Lie algebra $\mathfrak{g}$:
\end{conjecture}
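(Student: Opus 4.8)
The plan is to treat $G = \mathrm{SO}(m)$ (types $B_n$ and $D_n$, with $m = 2n+1$ and $m = 2n$ respectively) and to deduce the formula from the classical $A$-type identity \eqref{classical plucker}, which I take as known. Fix a nondegenerate symmetric form $Q$ on $\mathbb{C}^m$ preserved by $G$ and let $v(z)$ be a local lift of the line $(\pi_1\circ f)(z)$. First I would check that the integral condition for $\mathcal{N}$ makes $f$ an osculating flag: unwinding $\mathcal{N}(B) = \{x\mid[x,[\mathfrak{b},\mathfrak{b}]]\subset\mathfrak{b}\}/\mathfrak{b}$ and using $[\mathfrak{b},\mathfrak{b}] = \mathfrak{n}^+$ shows that $\mathcal{N}(B)$ is the span of the negative simple root spaces, so along an integral curve the Frenet frame advances by simple root directions and the osculating spaces $E_k(z) = \langle v(z),v'(z),\dots,v^{(k-1)}(z)\rangle$ satisfy $E_k' \subset E_{k+1}$. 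Completing $E_\bullet$ by $Q$-orthogonals gives a curve into the type $A_{m-1}$ flag variety to which \eqref{classical plucker} applies, producing forms $\varphi^A_j$ with curvatures $\theta^A_j = 2\varphi^A_j - \varphi^A_{j-1} - \varphi^A_{j+1}$.

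In type $B_n$ the principal nilpotent acts irreducibly on $\mathbb{C}^{2n+1}$, so the osculating flag is nondegenerate and, thanks to the self-paired zero weight in the middle, self-dual: $E_{2n+1-j} = E_j^\perp$. By $K$-invariance of $FS$ this gives the symmetry $\varphi^A_{2n+1-j} = \varphi^A_j$. For $j < n$ the fundamental representation $V_j$ is the vector representation $\bigwedge^j\mathbb{C}^{2n+1}$ and $\varphi_j = \varphi^A_j$; at the spinor node $2\omega_n = \epsilon_1+\dots+\epsilon_n$ is the highest weight of $\bigwedge^n\mathbb{C}^{2n+1}$, so $\mathscr{L}_{2\omega_n} = \mathscr{L}_{\omega_n}^{\otimes 2}$ forces $\varphi^A_n = 2\varphi_n$. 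Since the curvature form attached to a metric on a Riemann surface is invariant under scaling the metric by a constant, $\theta_j = \theta^A_j$ at every node; substituting $\varphi^A_n = 2\varphi_n$ and $\varphi^A_{n+1} = \varphi^A_n$ into the rows $j = n-1,n$ produces exactly the short bond of the $B_n$ Cartan matrix (the off-diagonal entries $-2$ and $-1$), while the rows $j < n-1$ are verbatim copies of \eqref{classical plucker}. Assembling the rows yields $\boldsymbol{\theta} = C\boldsymbol{\varphi}$ in type $B_n$.

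Type $D_n$ is where the reduction genuinely breaks, and I expect this to be the main obstacle. Here the principal nilpotent no longer acts irreducibly on $\mathbb{C}^{2n}$: the weight $-\epsilon_n$ is never reached, the osculating flag stalls at a hyperplane and is not self-dual, so the middle of the flag cannot be folded as before. The rows $j = 1,\dots,n-2$ still follow from \eqref{classical plucker}, the fork row $j = n-2$ using the additivity $\varphi^A_{n-1} = \varphi_{n-1} + \varphi_n$ that comes from $\bigwedge^{n-1}\mathbb{C}^{2n} = V_{\omega_{n-1}+\omega_n}$ and $\mathscr{L}_{\omega_{n-1}+\omega_n} = \mathscr{L}_{\omega_{n-1}}\otimes\mathscr{L}_{\omega_n}$. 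The two spinor rows must instead be obtained directly: along an integral curve the velocity of the half-spinor curve $\pi_n\circ f$ is proportional to $e_{-\alpha_n}\cdot s^+$, where $s^+$ is a lift into the half-spin representation $V_n = S^+$, so the second osculating datum $s^+\wedge(s^+)'$ has weight $2\omega_n - \alpha_n = \epsilon_1+\dots+\epsilon_{n-2} = \omega_{n-2}$ and lands in the top component $\bigwedge^{n-2}\mathbb{C}^{2n}$ of $\bigwedge^2 S^+$. Computing the Gaussian curvature of $\varphi_n$ through this component expresses its density via the $\bigwedge^{n-2}$-curve and gives $\theta_n = 2\varphi_n - \varphi_{n-2}$, and the mirror computation with $V_{n-1} = S^-$ gives $\theta_{n-1} = 2\varphi_{n-1} - \varphi_{n-2}$; these are precisely the two prongs of the $D_n$ fork. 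The delicate points I would still need to settle are the nondegeneracy of the half-spinor osculation, so that the $\bigwedge^{n-2}$-component is nonvanishing, and the matching of the invariant Hermitian norms on $S^\pm$, on $\bigwedge^2 S^\pm$ and on $\bigwedge^{n-2}\mathbb{C}^{2n}$, since only $\partial\bar\partial\log$ of these norms enters and every multiplicative constant must be shown to drop out.
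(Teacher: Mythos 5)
Your overall strategy---reduction to the classical type $A$ identity---is exactly the paper's, and your type $B_n$ argument (fold the self-dual osculating flag of the vector curve in $\mathbb{C}^{2n+1}$, use that $\bigwedge^n\mathbb{C}^{2n+1}$ has highest weight $2\omega_n$ to get $\varphi^A_n=2\varphi_n$, and observe that the curvature form is insensitive to a constant rescaling of the metric) is essentially the paper's Lemma \ref{veronese_lemma_odd} together with the theorem that follows it. For $D_n$ you treat the rows $j\le n-2$ as the paper does: your additivity $\varphi^A_{n-1}=\varphi_{n-1}+\varphi_n$, coming from $\bigwedge^{n-1}V$ being the top summand of $S_+\otimes S_-$, is precisely the paper's Segre lemma (Lemma \ref{n-1 metric}). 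Where you genuinely diverge is in the two spinor rows: you apply the first classical Pl\"ucker relation $\theta=2\varphi-\varphi^{(2)}$ directly to the half-spinor curves in $\mathbb{P}(S^{\pm})$ and identify their second associated curves with the $\omega_{n-2}$-fundamental curve via the multiplicity-one summand of highest weight $2\omega_n-\alpha_n=\omega_{n-2}$ inside $\bigwedge^2 S^{\pm}$. The paper instead stays entirely inside $Fl(\mathbb{C}^{2n})$: it exploits the two $SO(2n)$-orbits of maximal isotropic flags to define two embeddings $i_{\pm}:G/B\to Fl(V)$, proves a Veronese lemma identifying $\bigwedge^nV_{\pm}$ as the top summand of $\mathrm{Sym}^2S_{\pm}$ (so $\phi_{\pm}=2\varphi_{\pm}$), and then extracts the two prongs of the $D_n$ fork by pulling back the single $(n-1)$-th row of the $A_{2n-1}$ identity along $i_+$ and $i_-$ separately. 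Your route is more local and avoids the two-orbit bookkeeping, at the cost of a different plethysm ($\bigwedge^2$ rather than $\mathrm{Sym}^2$ of the half-spin representation) and of the checks you flag; those checks do close (the $\omega_{n-2}$-summand has multiplicity one, an invariant Hermitian form on an irreducible summand is unique up to a positive scalar, and the Fubini--Study form is unchanged by such a rescaling), so I see no gap. Your observation that in type $D_n$ the osculating flag of the vector curve stalls at a hyperplane correctly identifies the obstruction that forces either your direct spinor computation or the paper's use of both embeddings $i_{\pm}$.
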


\begin{equation}
\boldsymbol{\theta} = C \boldsymbol{\varphi}.
\end{equation}

\subsection{Connection with the classical case}

Consider the case when $ G = SL (n + 1, \mathbb{C}) $. The condition that the curve $ f: S \to F $ is an integral curve of the distribution $ \mathcal{N} $ means that the preimage under $ \pi_1: F \rightarrow \mathbb{P}(V)$ of the point $ x \in \pi_1 (S) $ is the flag of osculating $i$-planes of the curve $ \pi_1(S) $ at the point $ x $, i.e. the curve $ \pi_i (S) $ is the $i$-th associated of $ \pi_1 (S) $.

In this case the exterior power $ \bigwedge^i V$ of the standard representation of $G$ is the fundamental representation. The projective morphism $ \pi_i $ associated with the line bundle $ \mathscr{L} _i $ provides the Pl\"ucker embedding of the $i$-th associated curve on $Gr(i, n + 1) $ in $\mathbb{P}(V_i)$.

However Pl\"ucker embeddings of associated curves on Grassmannians do not always coincide with embeddings into the projectivization of fundamental representations. In particular this can be seen in the example of the special orthogonal groups.

Positselski proved the conjecture $ \ref{Giv theorem} $ in \cite{Pos} using the general technique for line bundles over flag varieties. At the same time in \cite{Giv},
in addition to the general formulation of the conjecture, Givental proposed the proof of the local Pl\"ucker formulas for symplectic group $ Sp(2n) $ based on the reduction of the problem to the classical formulas for the group $ SU(2n) $.

However, the question of the geometric interpretation in the case of a special orthogonal group has remained unanswered. In this paper we prove the $ \ref{Giv theorem} $ for orthogonal groups, i.e. for Cartan matrices of the series $ B_n $ and $ D_n $. Our proof is based on the reduction to the classical case.

\subsection*{Acknowledgements}
The reported study was funded by RFBR and CNRS, project number 21-51-15005. I am grateful to Leonid Positselski who introduced me to the problem some time ago.
\section{Local Pl\"ucker formulas for the group $SO (2n, \mathbb{C})$} 
Let us begin by considering the case when the group $ G = SO(2n, \mathbb{C}) $ with the Lie algebra $\mathfrak{g} = \mathfrak{so}(2n,\mathbb{B})$. Given a $2n$-dimensional vector space $V$ over $ \mathbb{C} $ equipped with a nondegenerate symmetric bilinear form $ Q $ which is preserved by $ G $. We will choose a basis $ e_1, \dots, e_n, f_1, \dots, f_{n} $ for $V$ in terms of which the quadratic form is given by
\begin{equation}\label{quadratic form}
Q (e_i, f_{i}) = Q(f_{i},e_i) = 1,
\end{equation}
$$ 
Q (e_i, e_j) = Q (f_i, f_j) = 0,
$$
and
$$
Q(e_s,f_k) =  Q(f_k,e_s) = 0,
$$
if $k \ne s$. The action of $G$ on the set of isotropic flags in $V$ with a given set of numerical invariants (i.e., the sequence of dimensions of its successive terms) is transitive except the case when maximal isotropic subspace in the flag has dimension $n$. In the latter case there are exactly two $G$-orbits with the given numerical invariants.

We will therefore consider two connected components of the variety of maximal isotropic flags in $V$. We denote these components by $F_+ $ and $ F_- $ and assume that the Borel subgroup $ B $ stabilizes the standard maximal isotropic flag $ \langle e_1 \rangle \subset \langle e_1, e_2 \rangle \subset ... \subset \langle e_1, ... e_n \rangle $ from $ F_+ $.

In contrast with the classical case the exterior powers of the tautological representation $ V_i: = \bigwedge^i V $ of $G$ are irreducible for $ i = 1, \ldots, n-1 $. Moreover, the exterior powers provide us the fundamental representations of the group $ G $ only for $ i = 1, \ldots, n-2 $. 

Let us denote by $ C (Q)^{even} $ the subalgebra of the Clifford algebra $C(Q)$ generated by elements of even degree with respect to a $\mathbb{Z}/2\mathbb{Z}$ grading. The remaining two fundamental representations $ V_i $ for $ i = n-1, n $ are realized as $C (Q)^{even}$-modules. We briefly recall the relevant constructions here. The decomposition $V = W \oplus W'$, where $W = \langle e_1,e_2,\ldots,e_n \rangle$ and $W' = \langle f_1,f_2,\ldots,f_n \rangle$, determines an isomorphism of algebras
\begin{equation}
C(Q) \cong \text{End}(\wedge^{\bullet} W),
\end{equation}
and $C(Q)^{even}$ respects the splitting $\bigwedge^{\bullet} W = \bigwedge^{even} W\oplus \bigwedge^{odd} W$. It then follows that 
\begin{equation}
C(Q)^{even} \cong \text{End}(\wedge^{even} W)\oplus \text{End}(\wedge^{odd} W).
\end{equation}
Moreover we have an embedding of Lie algebras
\begin{equation}
\mathfrak{so}(2n,\mathbb{C})\subset C(Q)^{even} \cong \mathfrak{gl}(\wedge^{even} W)\oplus \mathfrak{gl}(\wedge^{odd} W).
\end{equation}
These two representations $\wedge^{even} W$ and $\wedge^{odd} W$ are usually called half-spin representation (see \cite{FuHa}) .

Let us denote by $ E_{i,i} $ the diagonal matrix that  takes $ e_i $ to itself and kills $ e_j $ for $ j \ne i $. With the above-mentioned choice (\ref{quadratic form}), we may take as Cartan subalgebra $\mathfrak{h}$ the subalgebra of diagonal matrices generated by the $2n\times 2n$ matrices $H_i:=E_{i,i} - E_{i+n,i+n}$. We will correspondingly take as basis for the dual vector space $\mathfrak{h}^{*}$ the dual basis $L_i$ where $ L_i(H_j) = \delta_{i, j} $. For concreteness assume that the representation $ S_+ $ corresponds to the fundamental weight $\omega_{n-1} =  \omega_-: = \frac12 (L_1 + \ldots + L_ {n-1} -L_n) $ and the representation $ S_- $ corresponds to the fundamental weight $\omega_{n} =  \omega_+: = \frac12 (L_1 + \ldots + L_ {n-1} + L_n) $.

We also denote the morphisms $ \pi_{n-1} $ and $ \pi_n $ associated with the line bundles $ \mathscr{L}_{n-1} $ and $ \mathscr{L}_{n} $ by $ \pi_{-} $ and $\pi_+$ respectively, i.e. suppose that $ \pi_{\pm}: G / B \to \mathbb{P} (S_{\pm}) $, $ \pi_{\pm}: g \mapsto [g v_{\pm}] $ , where $ v_{+} $ and $v_{-}$ are highest weight vectors of the representations $ S_{+} $ and $S_-$.

Let $ i_{+}: G / B \to Fl(V) $ be the embedding that corresponds to the orbit of the standard maximal isotropic flag, i.e. $ i_+: [b] \mapsto \langle e_1 \rangle \subset \langle e_1, e_2 \rangle \subset \ldots \subset \langle e_1, e_2, \ldots e_{n-1}, e_n \rangle $. Define the analogous embedding that corresponds to the another orbit $ i_{-}: G / B \to Fl(V) $, $[b] \mapsto \langle e_1 \rangle \subset \langle e_1, e_2 \rangle \subset \ldots \subset \langle e_1, e_2, \ldots e_{n-1}, f_n \rangle $. Denote the composition of $i_{\pm}$, projection to $ Gr (n, 2n) $ and the Pl\"ucker embedding by $p_{n-1}$:
\begin{equation}
 p_{n-1}: G / B \rightarrow Fl(V) \rightarrow Gr(n-1,2n) \rightarrow \mathbb{P}(\wedge^{n-1} V)    
\end{equation}
For the future reference we prove the following lemma.

\begin{lemma}
Let $ \sigma: \mathbb{P}(S_ +) \times \mathbb{P}(S_-) \to \mathbb{P} (S_+ \otimes S_-) $ be the Segre embedding. Then there is a natural linear embedding $ e: \mathbb{P} (\wedge^{n-1} V) \to \mathbb{P} (S_+ \otimes S_-) $ such that the following diagram is commutative:
	\begin{equation}\label{segre diagram}
	\begin{tikzpicture}
	\matrix (m) [matrix of math nodes,row sep=3em,column sep=4em,minimum width=2em]
	{
		\, & \mathbb{P}(S_+) \times \mathbb{P}(S_-) \\
		G/B & \mathbb{P}(S_+ \otimes S_-) \\
		\, & \mathbb{P}(\wedge^{n-1}V)\\
	};
	\path[-stealth]
	(m-2-1) edge node [left] {$(\pi_+,\pi_-)\,\,\,$} (m-1-2)
	(m-2-1) edge node [left] {$p_{n-1}\,\,$} (m-3-2)
	(m-1-2) edge node [right]{$\sigma$} (m-2-2)
	(m-3-2) edge node [right] {$e$} (m-2-2);
	\end{tikzpicture}
	\end{equation}
	
\end{lemma}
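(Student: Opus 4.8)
The plan is to construct $e$ from the representation-theoretic decomposition of the tensor product $S_+\otimes S_-$ and then to reduce the commutativity of \eqref{segre diagram} to a single check at the base point $[B]\in G/B$, exploiting that every $G$-equivariant morphism out of the homogeneous space $G/B$ is determined by the image of $[B]$.

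First I would record that the highest weights of $S_+$ and $S_-$ are $\omega_{n-1}=\frac12(L_1+\cdots+L_{n-1}-L_n)$ and $\omega_{n}=\frac12(L_1+\cdots+L_{n-1}+L_n)$, so their sum is $\omega_{n-1}+\omega_{n}=L_1+\cdots+L_{n-1}$, which is exactly the highest weight of the irreducible representation $\wedge^{n-1}V$. The weight $\omega_{n-1}+\omega_{n}$ is the maximal weight of $S_+\otimes S_-$, and it occurs with multiplicity one: if $\mu+\nu=\omega_{n-1}+\omega_{n}$ with $\mu\le\omega_{n-1}$ a weight of $S_+$ and $\nu\le\omega_{n}$ a weight of $S_-$, then $(\omega_{n-1}-\mu)+(\omega_{n}-\nu)=0$ forces $\mu=\omega_{n-1}$, $\nu=\omega_{n}$. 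Hence this weight space is one-dimensional and spanned by $v_+\otimes v_-$, and $\wedge^{n-1}V$ sits inside $S_+\otimes S_-$ as its Cartan component. By Schur's lemma there is a $G$-equivariant linear embedding $\iota\colon \wedge^{n-1}V\hookrightarrow S_+\otimes S_-$, unique up to scalar, carrying the highest weight line $\mathbb{C}\,(e_1\wedge\cdots\wedge e_{n-1})$ onto $\mathbb{C}\,(v_+\otimes v_-)$. I define $e$ to be the projectivization of $\iota$; being the projectivization of a linear injection it is the required linear embedding, and it is $G$-equivariant.

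Next I would observe that both composites in \eqref{segre diagram} are $G$-equivariant maps $G/B\to\mathbb{P}(S_+\otimes S_-)$. The map $\sigma\circ(\pi_+,\pi_-)$ sends $[g]\mapsto[g v_+\otimes g v_-]=[g\cdot(v_+\otimes v_-)]$ and is equivariant because $\pi_\pm$ are and the Segre map intertwines the diagonal $G$-action. The map $e\circ p_{n-1}$ is equivariant because $p_{n-1}$ is the composite of the $G$-equivariant orbit embedding $i_\pm$, the $G$-equivariant projection $Fl(V)\to Gr(n-1,2n)$ and the Pl\"ucker embedding, while $e$ is equivariant by construction. Since $G$ acts transitively on $G/B$, it therefore suffices to verify that the two maps agree at the base point $[B]$.

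Finally I would carry out the base-point check. On one side $\sigma\circ(\pi_+,\pi_-)$ sends $[B]$ to $[v_+\otimes v_-]$. On the other side, the $(n-1)$-dimensional term of both flags $i_+([B])$ and $i_-([B])$ equals $\langle e_1,\dots,e_{n-1}\rangle$ — the two orbits differ only in the top $n$-dimensional subspace — so $p_{n-1}([B])=[e_1\wedge\cdots\wedge e_{n-1}]$, the highest weight line of $\wedge^{n-1}V$. Applying $e$ and using that $\iota$ carries this line onto $\mathbb{C}\,(v_+\otimes v_-)$ gives $[v_+\otimes v_-]$ again, so the two maps coincide at $[B]$ and hence everywhere. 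I expect the main obstacle to be the representation-theoretic input of the second paragraph: showing that $v_+\otimes v_-$ genuinely spans the highest weight line of the Cartan component, i.e.\ that it is not annihilated by the projection onto $\wedge^{n-1}V$, which rests on the multiplicity-one statement for the extremal weight $\omega_{n-1}+\omega_{n}$. Pinning down this weight vector concretely inside the Clifford/spinor model $S_\pm=\wedge^{\mathrm{even/odd}}W$ is where the bookkeeping is heaviest.
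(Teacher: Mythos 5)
Your proposal is correct and follows essentially the same route as the paper: both identify $\wedge^{n-1}V$ as the Cartan component of $S_+\otimes S_-$ with highest weight $\omega_{n-1}+\omega_n=L_1+\cdots+L_{n-1}$ and highest weight vector $v_+\otimes v_-$, and deduce the diagram from this. You additionally spell out the multiplicity-one argument and the equivariance/base-point check that the paper leaves implicit, which only strengthens the write-up.
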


\begin{proof}
	Note that $ \wedge^{n-1} V $ is the irreducible representation of $\mathfrak{g}$  with the highest weight $ \omega_+ + \omega_{-} $. The complete decomposition into irreducible representations of $ S_+ \otimes S_- $ looks like
	\begin{equation}\label{tensor decomposition of spin}
	S_+\otimes S_- = \wedge^{n-1}V \bigoplus\limits_{j>0}V_{n - 2j-1},
	\end{equation}
	where $V_i$ is the irreducible representation of $\mathfrak{g}$ with the highest weight $\omega_i = L_1+L_2+\ldots+L_{n - 2j-1}$.
	Note that the vector $ v _ + \otimes v _- $ has weight $ \omega_n + \omega_ {n-1} $ and therefore it is the highest weight vector of the first summand in (\ref{tensor decomposition of spin}). This completes the proof of the lemma.
	
\end{proof}

The projective space $ \mathbb{P}(S_+ \otimes S_-) $ admits a $ K $ -invariant Fubini - Study metric induced by the $ K $ -invariant Hermitian inner products on the spaces $ S_+ $ and $ S_- $. We denote the corresponding $ (1,1) $ - form of the metric on the space $ \mathbb {P} (S_+ \otimes S_-) $ by $ FS $.
The following standard properties of $(1,1)$-forms of the metrics will be needed later.
\begin{lemma}
	Let $ N, M $ be complex manifolds, $ f: N \to M $ a holomorphic map such that the induced map $ f_{*}: T'_z (N) \to T '_ {f (z)} (M) $ is injective for all $z \in N$, then the associated (1,1) -form of the induced metric on $ N $ is the pullback of the associated (1,1) -form of the metric on $ M $.
\end{lemma}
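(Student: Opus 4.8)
The plan is to reduce the statement to the naturality of the passage from a Hermitian metric to its associated $(1,1)$-form, the only essential input being that the hypothesis on $f_*$ guarantees that the pullback Hermitian form is again a genuine (positive-definite) metric. First I would fix conventions: a Hermitian metric $h$ on $M$ is a smoothly varying positive-definite Hermitian inner product on the holomorphic tangent bundle $T'M$, written in local holomorphic coordinates $w^1,\dots,w^m$ as $h=\sum_{\alpha,\beta}h_{\alpha\bar\beta}\,dw^\alpha\otimes d\bar w^\beta$, with associated $(1,1)$-form $\omega_M=i\sum_{\alpha,\beta}h_{\alpha\bar\beta}\,dw^\alpha\wedge d\bar w^\beta$. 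Equivalently, writing $g=\operatorname{Re}h$ for the underlying Riemannian metric and $J$ for the complex structure, one has the coordinate-free formula $\omega_M(X,Y)=g(JX,Y)$.

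Next I would make precise what the \emph{induced metric} on $N$ means. Since $f$ is holomorphic and $f_*\colon T'_zN\to T'_{f(z)}M$ is injective, the pullback $f^*h$, defined by $(f^*h)(u,v)=h(f_*u,f_*v)$ for $u,v\in T'_zN$, is a positive-definite Hermitian form on $T'_zN$, and hence a Hermitian metric on $N$; this is exactly the induced metric, whose associated $(1,1)$-form I denote $\omega_N$. Injectivity of $f_*$ is precisely the condition needed for positive-definiteness, so this is where the hypothesis enters.

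The heart of the argument is then a short naturality computation. Because $f$ is holomorphic, its real differential commutes with the complex structures, $f_*\circ J_N=J_M\circ f_*$; and because $f_*$ intertwines the two metrics by construction, for tangent vectors $X,Y$ on $N$ we obtain
\[
\omega_N(X,Y)=g_N(J_NX,Y)=g_M(f_*J_NX,\,f_*Y)=g_M(J_Mf_*X,\,f_*Y)=\omega_M(f_*X,f_*Y)=(f^*\omega_M)(X,Y).
\]
Alternatively, in local coordinates one notes that holomorphicity of $f$ makes each $f^*dw^\alpha$ a $(1,0)$-form, so that $f^*\omega_M=i\sum_{\alpha,\beta}(h_{\alpha\bar\beta}\circ f)\,(f^*dw^\alpha)\wedge(f^*d\bar w^\beta)$, which is manifestly the associated $(1,1)$-form of the metric $\sum_{\alpha,\beta}(h_{\alpha\bar\beta}\circ f)\,(f^*dw^\alpha)\otimes(f^*d\bar w^\beta)=f^*h$.

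I do not expect a genuine obstacle: the statement is a standard compatibility, and the only points requiring care are the choice of normalization constant in the definition of $\omega$ and the verification that holomorphicity of $f$ forces $f^*$ to preserve the $(p,q)$-bigrading, so that $f^*\omega_M$ is genuinely of type $(1,1)$ and does not acquire $(2,0)$ or $(0,2)$ components. Both are routine; the substantive content is carried entirely by the hypothesis that $f_*$ is injective, which is what allows one to speak of an induced metric on $N$ in the first place.
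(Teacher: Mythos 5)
Your argument is correct and complete: the injectivity of $f_*$ is exactly what makes the pulled-back Hermitian form a genuine metric, and the naturality computation (or equivalently the observation that holomorphicity preserves the $(p,q)$-bigrading) establishes the identity of $(1,1)$-forms. The paper states this lemma without proof, as a standard fact, so your write-up simply supplies the routine verification that the author omits; it is the expected argument and there is nothing to contrast.
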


\begin{lemma}\label{metric segre}
	Let $ \mathbb{P}^n $ and $ \mathbb{P}^m $ be projective spaces with homogeneous coordinates $ [z_0, \ldots, z_n] $ and $ [w_0, \ldots, w_m] $ respectively, equipped with the standard Fubini-Study metrics, $ \sigma $ is the Segre embedding $ \sigma: \mathbb{P}^n \times \mathbb{P}^m \to \mathbb{P}^N $
	\begin{equation}
	\sigma([z_0,\ldots,z_n], [w_0,\ldots, w_m] = [(z_iw_j)_{0\leqslant i \leqslant n, 0\leqslant j \leqslant m}].
	\end{equation}  
	Denote by $ \alpha: \mathbb{P}^n \times \mathbb {P}^m \to \mathbb{P}^n $ and $ \beta: \mathbb{P}^n \times \mathbb{P}^m \to \mathbb{P}^m $ the projections onto the factors, then $ \sigma $ is a holomorphic isometry, that is
	\begin{equation}
	\sigma^*FS = \alpha^*FS+ \beta^* FS
	\end{equation}
\end{lemma}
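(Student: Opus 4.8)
The plan is to reduce the identity $\sigma^*FS = \alpha^*FS + \beta^* FS$ to the additivity of the logarithm, exploiting the fact that the Fubini-Study $(1,1)$-form admits a local K\"ahler potential. Recall that on the affine chart of $\mathbb{P}^N$ where a given homogeneous coordinate is nonzero, and for any local holomorphic lift $Z$ of a point, the associated form of the Fubini-Study metric is $FS = \frac{i}{2\pi}\partial\bar\partial\log\|Z\|^2$, and that this expression is independent of the chosen lift because changing $Z$ by a nonvanishing holomorphic factor alters $\log\|Z\|^2$ by a pluriharmonic function, which is annihilated by $\partial\bar\partial$. The first step is to write down such potentials explicitly: on the relevant charts one has $\alpha^* FS = \frac{i}{2\pi}\partial\bar\partial\log\left(\sum_i |z_i|^2\right)$ and $\beta^* FS = \frac{i}{2\pi}\partial\bar\partial\log\left(\sum_j |w_j|^2\right)$.

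The key observation, and the heart of the argument, is the multiplicativity of the Hermitian norm under the Segre map. Since $\sigma$ sends $([z_0,\ldots,z_n],[w_0,\ldots,w_m])$ to the point with homogeneous coordinates $(z_i w_j)$, a local holomorphic lift of the image is exactly $(z_i w_j)_{i,j}$, whose squared norm factors as
\begin{equation}
\sum_{i,j}\lvert z_i w_j\rvert^2 = \Bigl(\sum_i \lvert z_i\rvert^2\Bigr)\Bigl(\sum_j \lvert w_j\rvert^2\Bigr).
\end{equation}
The second step is to take logarithms, turning this product into the sum $\log\bigl(\sum_{i,j}\lvert z_i w_j\rvert^2\bigr) = \log\bigl(\sum_i \lvert z_i\rvert^2\bigr) + \log\bigl(\sum_j \lvert w_j\rvert^2\bigr)$, and then apply $\frac{i}{2\pi}\partial\bar\partial$ to both sides. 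By the potential description above, the left-hand side is $\sigma^* FS$, while the two summands on the right are $\alpha^* FS$ and $\beta^* FS$ (each depends on only one set of variables, so no cross terms appear). This yields the asserted equality of forms.

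Finally I would check the initial claim that $\sigma$ is a holomorphic embedding, so that the previous lemma (injectivity of the differential implies that the pullback of the ambient form is the associated form of the induced metric) applies and makes the statement about the induced metric meaningful. I expect the only point requiring a word of care to be the well-definedness of the local potential, namely that the computation does not depend on the choice of lift or of affine chart; this is handled once and for all by the pluriharmonicity remark above, after which the argument is a direct calculation. Thus the main conceptual obstacle is essentially absent: the entire content is that the Segre embedding multiplies norms, which becomes additivity of metrics after passing through $\partial\bar\partial\log$.
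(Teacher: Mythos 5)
The paper states this lemma without proof, as one of the ``standard properties of $(1,1)$-forms of the metrics,'' so there is nothing in the source to compare against; your argument supplies exactly the standard proof one would expect. It is correct: the local K\"ahler potential $\frac{i}{2\pi}\partial\bar\partial\log\|Z\|^2$, the multiplicativity $\sum_{i,j}|z_iw_j|^2 = \bigl(\sum_i|z_i|^2\bigr)\bigl(\sum_j|w_j|^2\bigr)$, and the pluriharmonicity remark handling independence of the lift together give a complete argument.
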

 We shall need the following fact.
\begin{lemma}\label{n-1 metric}
	Let $ \varphi_+ $ and $ \varphi_- $ denote the pullback of the $ FS $ metrics on the spaces $ \mathbb{P}(S_+) $ and $ \mathbb{P} (S_-) $ via morphims $\pi_+\circ f$ and $\pi_-\circ f$ respectively and let $\phi_ {n-1} $ be the metric on the curve $ S $ induced by pullback of the metric $FS$ on the space $ \mathbb{P} (\wedge^{n-1} V) $ via $p_{n-1}\circ f$. Then the following equality holds 
	\begin{equation} \label{segre metric sum}
	\varphi_ + + \varphi_- = \phi_{n-1}. \end{equation}
\end{lemma}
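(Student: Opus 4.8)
The plan is to reduce the identity to a diagram chase, using the commutative square of the first lemma together with the Segre isometry of Lemma \ref{metric segre}. Since every form in the statement is by definition the pullback of a Fubini--Study form, the only tools needed are the functoriality of pullback, $(g\circ h)^* = h^*\circ g^*$, and two geometric inputs: that the Segre map is an isometry, and that the linear embedding $e$ from the first lemma is an isometric embedding of the Fubini--Study metrics.

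First I would establish the key step, namely $e^*FS = FS$ on $\mathbb{P}(\wedge^{n-1}V)$. The map $e$ is induced by the $G$-equivariant linear inclusion of $\wedge^{n-1}V$ as the highest-weight summand of $S_+\otimes S_-$ in the decomposition (\ref{tensor decomposition of spin}). The Fubini--Study form on $\mathbb{P}(S_+\otimes S_-)$ comes from the $K$-invariant Hermitian product on $S_+\otimes S_-$, the tensor product of those on $S_+$ and $S_-$, and since $\wedge^{n-1}V$ is a $K$-stable subspace the restriction of this product is again $K$-invariant. Because $\wedge^{n-1}V$ is irreducible, as recalled above, this restricted product must be a positive scalar multiple of the chosen invariant product on $\wedge^{n-1}V$ by Schur's lemma. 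As the Fubini--Study form depends only on the Hermitian product up to a positive scalar --- the scalar contributes an additive constant $\log\lambda$ to $\log\|v\|^2$ that is annihilated by $\partial\bar\partial$ --- we conclude that $e$ pulls the Fubini--Study form back to the Fubini--Study form.

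With this in hand the computation is routine. Using that the forms are pullbacks and the commutativity $e\circ p_{n-1} = \sigma\circ(\pi_+,\pi_-)$, I would write
\begin{equation}
\phi_{n-1} = (p_{n-1}\circ f)^*FS = (e\circ p_{n-1}\circ f)^*FS = ((\pi_+,\pi_-)\circ f)^*\sigma^*FS.
\end{equation}
Applying Lemma \ref{metric segre}, $\sigma^*FS = \alpha^*FS + \beta^*FS$, and noting that $\alpha\circ(\pi_+,\pi_-) = \pi_+$ and $\beta\circ(\pi_+,\pi_-) = \pi_-$, this becomes $(\pi_+\circ f)^*FS + (\pi_-\circ f)^*FS = \varphi_+ + \varphi_-$, which is the claim.

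The main obstacle I anticipate is the isometry statement $e^*FS = FS$, specifically pinning down that the invariant Hermitian product on $\wedge^{n-1}V$ is literally, up to scale, the restriction of the product on $S_+\otimes S_-$ rather than merely abstractly proportional. This rests on the uniqueness of the $K$-invariant inner product on the irreducible module $\wedge^{n-1}V$, combined with the observation that $K$ preserves both the ambient product and the subspace. Everything else is formal manipulation of pullbacks.
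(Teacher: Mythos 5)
Your proposal is correct and follows essentially the same route as the paper: both proofs chase the commutative diagram relating $e\circ p_{n-1}$ to $\sigma\circ(\pi_+,\pi_-)$ and then invoke the Segre isometry $\sigma^*FS=\alpha^*FS+\beta^*FS$. The only difference is that you explicitly justify the step $e^*FS=FS$ via Schur's lemma and the scale-invariance of the Fubini--Study form, a point the paper's proof leaves implicit when it writes $(e\circ p_{n-1}\circ f)^*FS=(p_{n-1}\circ f)^*FS$.
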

\begin{proof}
	The commutative diagram (\ref{segre diagram}) shows that on the one hand
	\begin{equation}\label{plucker metric}
	(e\circ p_{n-1} \circ f)^*FS = (p_{n-1}\circ f)^*FS = \phi_{n-1}.
	\end{equation}
	On the other hand, it follows from the above lemmas that the following equalities hold up
	\begin{equation}\label{segre metric}
	(\sigma \circ (\pi_+,\pi_-)\circ f)^* FS = ((\pi_+,\pi_-)\circ f)^* (\alpha^*FS+\beta^*FS) =
	\end{equation}
	$$
	=  (\pi_+ \circ f)^*FS+(\pi_-\circ f)^*FS = \varphi_+ +\varphi_-.
	$$
	To prove the lemma it is sufficient to note that the left-hand sides of (\ref{plucker metric}) and (\ref{segre metric}) coincide. Therefore the lemma is proven.
\end{proof}
Consider the morphisms $p_{\pm}$:
\begin{equation}
p_{\pm}: G / B \to Fl(V) \to Gr(n, 2n)  \subset \mathbb{P}(\wedge^n \mathbb{C}^{2n})  
\end{equation} 
which are compositions of the embeddings $ i_{\pm}$, projections to $ Gr(n, 2n) $ and the Pl\"ucker embedding.
We shall need the following properties of the morphisms $p_{\pm}$.
\begin{lemma}\label{veronese_lemma}
	Let $ v_{\pm}: \mathbb{P}(S_{\pm}) \to \mathbb{P} (Sym^2S_{\pm}) $ be the quadratic Veronese embeddings, then there exist natural linear embeddings $ e_{\pm}: \mathbb{P} (\wedge^nV_{\pm}) \to  \mathbb{P} (\wedge^n Sym^2S _{\pm}) $ such that the following diagram is commutative
	\begin{equation}\label{spin diagram2}
	\begin{tikzpicture}
	\matrix (m) [matrix of math nodes,row sep=3em,column sep=4em,minimum width=2em]
	{
		\, & \mathbb{P}(S_{\pm}) \\
		G/B & \mathbb{P}(\mathrm{Sym}^2S_{\pm})\\
		\, & \mathbb{P}(\wedge^{n}V_{\pm})\\
	};
	\path[-stealth]
	(m-2-1) edge node [left] {$\pi_{\pm}\,$} (m-1-2)
	(m-1-2) edge node [right] {$v_{\pm}\,\,$} (m-2-2)
	(m-3-2) edge node [left] {$\,\,$} (m-2-2)
	(m-3-2) edge node [right]{$e_{\pm}$} (m-2-2)
	(m-2-1) edge node [left]{$p_{\pm}$} (m-3-2);
	\end{tikzpicture}
	\end{equation}
	
\end{lemma}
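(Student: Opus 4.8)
The plan is to reduce the commutativity of (\ref{spin diagram2}) to a statement about highest weight vectors and then propagate it over all of $G/B$ by equivariance. The starting point is purely representation-theoretic. The half-spin module $S_\pm$ has highest weight $\omega_\pm$ with highest weight vector $v_\pm$, so the square $v_\pm^{2}\in\mathrm{Sym}^2 S_\pm$ is a highest weight vector of weight $2\omega_\pm$. Every weight of $\mathrm{Sym}^2 S_\pm$ is a sum of two weights of $S_\pm$, each dominated by $\omega_\pm$, so $2\omega_\pm$ is the unique maximal weight and its weight space is the line $\mathbb{C}\,v_\pm^{2}$. Hence the irreducible $G$-submodule generated by $v_\pm^{2}$ is the Cartan component of $\mathrm{Sym}^2 S_\pm$, it occurs with multiplicity one, and its highest weight $2\omega_\pm$ equals the highest weight of the irreducible summand $\wedge^{n}V_\pm$ of the top exterior power through which $p_\pm$ factors; here I use the splitting $\wedge^{n}V=\wedge^{n}V_+\oplus\wedge^{n}V_-$ into the two irreducibles of highest weights $2\omega_+=L_1+\cdots+L_n$ and $2\omega_-=L_1+\cdots+L_{n-1}-L_n$. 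By Schur's lemma the resulting inclusion of Cartan components is unique up to scalar, so it defines the natural $G$-equivariant linear embedding $\wedge^{n}V_\pm\hookrightarrow\mathrm{Sym}^2 S_\pm$; projectivizing yields $e_\pm:\mathbb{P}(\wedge^{n}V_\pm)\to\mathbb{P}(\mathrm{Sym}^2 S_\pm)$, normalized so that the highest weight line is sent to $[v_\pm^{2}]$.

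With $e_\pm$ so defined I would verify the diagram on the base point $[B]$ and then invoke equivariance. The highest weight line of $\wedge^{n}V_\pm$ is the Pl\"ucker point of the standard maximal isotropic subspace of the component $F_\pm$, namely $[e_1\wedge\cdots\wedge e_n]$ for the $+$ component and $[e_1\wedge\cdots\wedge e_{n-1}\wedge f_n]$ for the $-$ component; hence $p_\pm([B])$ is this line and $e_\pm(p_\pm([B]))=[v_\pm^{2}]$ by the normalization above. On the other side $\pi_\pm([B])=[v_\pm]$, so $v_\pm(\pi_\pm([B]))=[v_\pm^{2}]$ as well. The quadratic Veronese map is $G$-equivariant, since $v_\pm([gs])=[(gs)^2]=g\cdot[s^2]$, so $v_\pm\circ\pi_\pm([g])=[g\cdot v_\pm^{2}]$; likewise $e_\pm$ is linear and $G$-equivariant and $p_\pm$ is equivariant, so $e_\pm\circ p_\pm([g])=[g\cdot v_\pm^{2}]$. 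The two compositions thus agree on the single $G$-orbit $G/B$, which is the asserted commutativity.

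The only genuine content lies in the first paragraph, and the step that needs care is the claim that the Cartan component of $\mathrm{Sym}^2 S_\pm$ is exactly the summand $\wedge^{n}V_\pm$ rather than some other constituent. I would settle this entirely through highest weights: the multiplicity-one occurrence of the irreducible of highest weight $2\omega_\pm$ in $\mathrm{Sym}^2 S_\pm$, combined with the decomposition $\wedge^{n}V=\wedge^{n}V_+\oplus\wedge^{n}V_-$ into the irreducibles of highest weights $2\omega_+$ and $2\omega_-$, leaves no ambiguity. It is precisely this matching of highest weights that makes both the embedding $e_\pm$ and the target $\mathbb{P}(\wedge^{n}V_\pm)$ of $p_\pm$ canonical.
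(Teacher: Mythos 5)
Your proof is correct and follows essentially the same route as the paper: both arguments hinge on the observation that $v_\pm\otimes v_\pm$ is a highest weight vector of weight $2\omega_\pm$ in $\mathrm{Sym}^2S_\pm$, matching the highest weight of the irreducible summand $\wedge^nV_\pm$ of $\wedge^nV$, which pins down the linear embedding $e_\pm$. The only cosmetic difference is that you derive the multiplicity-one occurrence of the Cartan component from the dominance-order argument rather than quoting the explicit decomposition $\mathrm{Sym}^2S_\pm=\wedge^nV_\pm\oplus\bigoplus_{j>0}\wedge^{n-4j}V$ as the paper does, and you spell out the equivariance check of commutativity that the paper leaves implicit.
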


\begin{proof}
	The representation $ \wedge^{n} V $ is no longer an irreducible representation of $\mathfrak{g}$ as in the classical case. It decomposes into a direct sum of the irreducible representations $ \wedge^{n} V_{-} $ and $ \wedge^{n} V_+ $ with highest weights $ 2 \omega_{-} $ and $ 2 \omega_{+} $ respectively.

	Note also that there is the following direct sum decomposition of $ Sym^2S_{\pm} $ into irreducible representations of $\mathfrak{g}$:
	\begin{equation}
	\mathrm{Sym}^2 S_{\pm} = \wedge^{n}V_{\pm}\oplus \bigoplus\limits_{j>0}\wedge^{n-4j}V
	\end{equation}
	
	Since the vectors $ v_+ \otimes v_+ $ and $ v_- \otimes v_- $ are vectors of highest weight $ 2 \omega_+ $ and $ 2 \omega_{-} $ respectively, we get the natural linear embeddings $e_{\pm}$.
\end{proof}

\begin{remark} The morphisms $ \pi_{+} $ and $\pi_{-}$ decompose into a composition as follows

\begin{equation}
\pi_{\pm}: G / B \to G / P_{\pm} \xrightarrow {spin} \mathbb{P} (S_{\pm}),
\end{equation}
Note that $ \mathrm{Pic} (G/P_{\pm}) \simeq \mathbb{Z} $ due to the fact that $ P_{\pm} $ is a maximal parabolic subgroup, and the positive generator of the group corresponds to the very ample bundle $ \mathcal{O}_{\mathbb {S}_{\pm}} (1) $ where $\mathbb{S}_{+}$ and $\mathbb{S}_{+}$ are spinor varieties. The embeddings $spin_{\pm}$ are associated to these generators.
\end{remark}

\begin{lemma}\label{metric veronese}
	
	Let $ \phi_+ $ and $\phi_- $ denote the pullback of $ FS $ metrics from the spaces $\mathbb{P}(\wedge^nV_+) $ and $\mathbb{P}(\wedge^nV_-) $ via morphims $p_+\circ f$ and $p_-\circ f$ respectively and let $\phi_ {n-1} $ be the metric on the curve $ S $ induced by pullback of metrics $FS$ from the spaces $ \mathbb{P} (\wedge^{n-1} V_+) $ and $ \mathbb{P} (\wedge^{n-1} V_-) $ via morphims $p_{+}\circ f$ and $p_{-}\circ f$. Then the following equalities hold $\phi_+ = 2 \varphi_+ $ and $ \phi_- = 2 \varphi_- $.
\end{lemma}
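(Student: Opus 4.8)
The plan is to read off the result from the commutative diagram of Lemma~\ref{veronese_lemma} in exactly the way Lemma~\ref{n-1 metric} was read off from the diagram~(\ref{segre diagram}): pull back the Fubini--Study form along the two paths $G/B \to \mathbb{P}(\mathrm{Sym}^2 S_{\pm})$ and compare. Composing the diagram with $f$ gives the equality of maps $e_{\pm}\circ p_{\pm}\circ f = v_{\pm}\circ \pi_{\pm}\circ f$, so functoriality of the pullback yields
\begin{equation}
(p_{\pm}\circ f)^*(e_{\pm}^*FS) = (\pi_{\pm}\circ f)^*(v_{\pm}^*FS).
\end{equation}
It therefore suffices to compute $e_{\pm}^*FS$ and $v_{\pm}^*FS$ separately.

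For the left factor I would note that $e_{\pm}$ is the linear embedding induced by the inclusion $\wedge^n V_{\pm}\hookrightarrow \mathrm{Sym}^2 S_{\pm}$ of irreducible summands from Lemma~\ref{veronese_lemma}. Since the $K$-invariant Hermitian product on $\mathrm{Sym}^2 S_{\pm}$ restricts, by irreducibility and Schur's lemma, to a scalar multiple of the invariant product on the subrepresentation $\wedge^n V_{\pm}$, and the Fubini--Study form is unchanged under rescaling the defining Hermitian product, the metric induced on the linear subspace is the Fubini--Study metric itself. By the standard property of metrics induced by holomorphic maps with injective differential, this gives $e_{\pm}^*FS = FS$, and hence $(p_{\pm}\circ f)^*(e_{\pm}^*FS) = (p_{\pm}\circ f)^*FS = \phi_{\pm}$.

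For the right factor I would use that $v_{\pm}$ is the quadratic Veronese embedding, which pulls back $\mathcal{O}_{\mathbb{P}(\mathrm{Sym}^2 S_{\pm})}(1)$ to $\mathcal{O}_{\mathbb{P}(S_{\pm})}(2)$; since the Fubini--Study form is the curvature of $\mathcal{O}(1)$, passing to curvatures gives $v_{\pm}^*FS = 2\,FS$. Consequently $(\pi_{\pm}\circ f)^*(v_{\pm}^*FS) = 2\,(\pi_{\pm}\circ f)^*FS = 2\,\varphi_{\pm}$. Combining the two computations with the displayed identity gives $\phi_{\pm} = 2\,\varphi_{\pm}$, as claimed.

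The routine inputs are the diagram of Lemma~\ref{veronese_lemma} and functoriality of the pullback; the two facts carrying the actual content are the scaling $v_{\pm}^*FS = 2\,FS$ for the quadratic Veronese and the isometry $e_{\pm}^*FS = FS$ for the linear embedding. The main obstacle I expect is the latter: one must verify that the Hermitian product defining $FS$ on $\mathbb{P}(\wedge^n V_{\pm})$ really is a scalar multiple of the restriction of the product on $\mathbb{P}(\mathrm{Sym}^2 S_{\pm})$, so that $e_{\pm}$ is an isometric embedding and not merely a linear one. This is precisely where $K$-invariance and Schur's lemma are indispensable, since an arbitrary linear embedding of projective spaces need not preserve the Fubini--Study form.
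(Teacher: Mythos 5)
Your argument is correct and follows the paper's proof in all essentials: both pull back $FS$ along the two sides of the commutative diagram (\ref{spin diagram2}) and reduce everything to the scaling $v_{\pm}^*FS = 2\,FS$. The only divergence is in how that scaling is justified --- the paper factors the Veronese map as the Segre embedding precomposed with the diagonal and invokes Lemma \ref{metric segre}, whereas you argue via $v_{\pm}^*\mathcal{O}(1) = \mathcal{O}(2)$ and curvature; your explicit Schur's-lemma verification that $e_{\pm}$ is an isometric, not merely linear, embedding is a point the paper leaves implicit, and it is a worthwhile addition.
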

\begin{proof}
	 In order to prove the above statement we need to return to the commutative diagram (\ref{spin diagram2}). The spaces $\mathbb{P}(Sym^2S_{+})$ and $\mathbb{P}(Sym^2S_{-}) $ have the Fubini-Study metric induced by the Hermitian scalar product on the spaces $S_+\otimes S_+$ and $S_- \otimes S_-$ respectively.
	
	Note that, the Segre embedding $\sigma_ +: \mathbb{P}(S_+) \to \mathbb{P}(S_+ \otimes S_+)$ factors through $\mathbb{P}(S_+) \times \mathbb{P}(S_+)$:
	
	\begin{equation}
v_+: \mathbb{P}(S_+) \xrightarrow{\Delta}\mathbb{P}(S_+) \times \mathbb{P}(S_+)\xrightarrow{\sigma} \mathbb{P}(S_+ \otimes S_+),
	\end{equation}
	where $\Delta$ is the diagonal morphism, $\sigma$ -- Segre embedding. We already know by lemma \ref{metric segre} that  $ v_+^*(FS) = 2 FS $. The similar relation holds for the morphism $ v_- $. Thus we get:

	\begin{equation}
	2 \varphi_{\pm} = (v_{\pm} \circ \pi_{\pm} \circ f)^* FS = (e_{\pm}\circ p_{\pm}  \circ f)^* FS = \phi_{ \pm}
	\end{equation} 
	
\end{proof}

Let $ \theta(\mathscr{L}) $ denote the curvature of the $ K $ - invariant metric in an arbitrary bundle $ \mathscr{L} $ with the action of $ G $ on $ G / B $. Let $l$ be a local section of $ \mathscr{L} $, then the following formula holds:
\begin{equation}
\theta(\mathscr{L}) = -\partial \overline{\partial}\mathrm{ln}\|l\|.
\end{equation}

We slightly abuse notation and take certainly a standard candidate for Cartan subalgebra of $\mathfrak{sl}(2n,\mathbb{C})$, namely the subalgebra of diagonal matrices with zero trace and write $L_i$ for the element of the dual basis such that $L_i(E_{j,j}) = \delta_{i,j}$. 
\begin{lemma} Suppose $ \mathcal{N}_{\pm} $ are line bundles on the projective variety $ G / B $ corresponding to the simple roots $ \lambda_{\pm} = L_{n-1} \pm L_{n} $ of $ \mathfrak{so}(2n, \mathbb{C}) $, and $ \mathcal{N}_{n-1} $ is the line bundle on the full flag varietiy $Fl(V)$ corresponding to the simple root $ \lambda_{n-1 } = L_{n-1} -L_n $ of $\mathfrak{sl}(2n,\mathbb{C})$. Then there are following identities:
	$$
	i_{\pm}^{*}(\theta(\mathscr{N}_{n-1}) = \theta(\mathscr{N}_{\mp})
	$$
\end{lemma}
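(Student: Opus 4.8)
The plan is to reduce the identity to a computation of characters. I will use two facts. First, each embedding $i_\pm$ is equivariant for the compact form $K$ of $SO(2n,\mathbb{C})$ sitting inside the compact form of $SL(2n,\mathbb{C})$; consequently the pullback under $i_\pm$ of the $K$-invariant Hermitian metric on $\mathscr{N}_{n-1}$ is again $K$-invariant, and since curvature commutes with the pullback of metrized bundles we have
$$
i_\pm^{*}\bigl(\theta(\mathscr{N}_{n-1})\bigr) = \theta\bigl(i_\pm^{*}\mathscr{N}_{n-1}\bigr).
$$
Second, the curvature of the $K$-invariant metric on a $G$-homogeneous line bundle over $G/B$ depends only on the isomorphism class of the bundle, i.e. on the character of $B$ to which it is associated, because rescaling the metric by a constant does not change $-\partial\overline{\partial}\ln\|l\|$. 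Thus it suffices to identify the pulled-back bundles $i_\pm^{*}\mathscr{N}_{n-1}$ with the homogeneous bundles $\mathscr{N}_\mp$ on $G/B$, which amounts to computing the restriction of the character $\lambda_{n-1}$ along $i_\pm$.

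Next I would express $\mathscr{N}_{n-1}$ through the tautological filtration. Writing $U_1\subset U_2\subset\ldots$ for the tautological flag on $Fl(V)$, the bundle associated to the simple root $\lambda_{n-1}=L_{n-1}-L_n$ of $\mathfrak{sl}(2n,\mathbb{C})$ is, up to the fixed sign convention relating weights to homogeneous bundles, the tensor product of the graded pieces $(U_{n-1}/U_{n-2})$ and $(U_n/U_{n-1})^{*}$. Pulling back along $i_\pm$ replaces these graded pieces by the corresponding successive quotients of the isotropic flags $i_\pm([b])$. Since both embeddings agree on the first $n-1$ steps and differ only in the $n$-th step, the entire computation localizes in the $(n-1)$-st and $n$-th graded pieces.

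The decisive step is then the weight bookkeeping for the Cartan subalgebra $\mathfrak{h}=\langle H_j\rangle$, $H_j=E_{j,j}-E_{j+n,j+n}$, of $\mathfrak{so}(2n,\mathbb{C})$. At the base point the $(n-1)$-st graded piece is $\langle e_{n-1}\rangle$ for both embeddings, which has weight $L_{n-1}$. The $n$-th graded piece, however, is $\langle e_n\rangle$ for $i_+$ and $\langle f_n\rangle$ for $i_-$: from $H_j e_n=\delta_{j,n}e_n$ and $H_j f_n=-\delta_{j,n}f_n$ one reads off the weights $+L_n$ and $-L_n$ respectively. Hence the character $\lambda_{n-1}$ restricts to $L_{n-1}-L_n=\lambda_-$ along $i_+$ and to $L_{n-1}+L_n=\lambda_+$ along $i_-$, giving $i_+^{*}\mathscr{N}_{n-1}\cong\mathscr{N}_-$ and $i_-^{*}\mathscr{N}_{n-1}\cong\mathscr{N}_+$, and therefore the asserted identity $i_\pm^{*}(\theta(\mathscr{N}_{n-1}))=\theta(\mathscr{N}_\mp)$.

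The main obstacle is the correct handling of the fact that $i_+$ and $i_-$ factor through different Borel subgroups of $SL(2n,\mathbb{C})$ — the completed flags they produce are stabilized by different $SL$-Borels — so that a single $SL$-character $\lambda_{n-1}$ restricts to the two distinct $SO$-characters $\lambda_\mp$. One must also check that the sign conventions attaching homogeneous line bundles to weights are chosen compatibly on $Fl(V)$ and on $G/B$, since an inconsistent choice would flip the overall sign of the curvature. Once this is fixed, the only genuine geometric input is the sign change $+L_n\mapsto -L_n$ produced by replacing $e_n$ with the dual isotropic vector $f_n$ in the $n$-th step of the flag, which is precisely what interchanges $\mathscr{N}_-$ and $\mathscr{N}_+$.
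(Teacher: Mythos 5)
Your argument is correct, and it is worth noting that the paper states this lemma without supplying any proof at all, so yours fills a genuine gap rather than duplicating an existing argument. The three ingredients you use are exactly what is needed: (i) curvature commutes with pullback of metrized bundles, and the pulled-back metric is invariant under the compact form $K$ of $SO(2n,\mathbb{C})$ because $K$ embeds into the unitary group of a Hermitian form compatible with $Q$, so it agrees with the standard $K$-invariant metric on $i_\pm^{*}\mathscr{N}_{n-1}$ up to a constant and hence has the same curvature; (ii) the identification of $\mathscr{N}_{n-1}$ with a tensor product of graded pieces of the tautological flag, which reduces everything to the $(n-1)$-st and $n$-th steps; (iii) the weight computation $H_n e_n = e_n$, $H_n f_n = -f_n$, which converts $L_{n-1}-L_n$ into $\lambda_- = L_{n-1}-L_n$ along $i_+$ and into $\lambda_+ = L_{n-1}+L_n$ along $i_-$, matching the stated $i_\pm^{*}\theta(\mathscr{N}_{n-1}) = \theta(\mathscr{N}_{\mp})$. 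You are also right to flag the two conventions that could silently flip signs: the choice of whether $L_k$ is attached to $U_k/U_{k-1}$ or its dual, and the fact that the image of $i_-$ is the orbit of a flag stabilized by a different $SL(2n)$-Borel; since $\mathscr{N}_{n-1}$ is defined globally on $Fl(V)$ via the tautological flag, the fiberwise computation at the $i_-$ base point is legitimate, and the convention cancels because it is applied uniformly on both $Fl(V)$ and $G/B$. The only cosmetic point is that $i_\pm$ must land in the \emph{full} flag variety, so the isotropic flag has to be completed by orthogonal complements; this does not affect your computation since only the steps $U_{n-2}\subset U_{n-1}\subset U_n$ enter.
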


Classical Pl\"ucker formulas for the group $SL(2n,\mathbb{C})$ give the following relation on the curve $S$:
\begin{equation} \label{classic plucker}
 \boldsymbol{\vartheta} = C \boldsymbol{\phi},
\end{equation}
where $C$ is a Cartan matrix of type $A_{2n-1} $. We notice that relation (\ref{classic plucker}) descends to the analogous relation on $Fl(V)$:
\begin{equation}\label{claccical plucker on grassmannian}
 \widehat{\boldsymbol{\vartheta}} = C \widehat{\boldsymbol{\phi}} 
\end{equation}
where $\widehat{\boldsymbol{\vartheta}}$ is the vector of $(1,1)$-forms $\widehat{\vartheta}_i:= \theta(\mathcal{N}_i)$ on $Fl(V)$, $\mathcal{N}_i$ is the line bundle on $Fl(V)$ corresponding to the simple root $\lambda_i = L_i-L_{i+1}$ of $\mathfrak{sl}(2n,\mathbb{C})$, $\widehat{\boldsymbol{\phi}}$ -- vector of $(1,1)$-forms with $\widehat{\phi}_i:=\widehat{p}_i^*(FS)$ and $\widehat{p}_i: Fl(V) \rightarrow \mathbb{P}(\wedge^i V) $ is the standard projection.

\begin{theorem}
	The local Pl\"ucker formulas for the group $SO(2n, \mathbb{C})$ are obtained by taking pullback of the classical Pl\"ucker formulas on $Fl(V)$ via the maps $i_{\pm}\circ f$ of the first $n-1$ components of the vector $ \widehat{\boldsymbol{\vartheta}} $.
\end{theorem}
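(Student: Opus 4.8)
The plan is to take the classical relation \eqref{claccical plucker on grassmannian} on $Fl(V)$ as given and to transport it to the curve $S$ along the two embeddings $i_+\circ f$ and $i_-\circ f$, reading off the $n$ equations of the $D_n$ Cartan matrix from the first $n-1$ of the $2n-1$ equations of the $A_{2n-1}$ matrix. Writing out the $A_{2n-1}$ Cartan matrix, the $i$-th component of \eqref{claccical plucker on grassmannian} is $\widehat{\vartheta}_i = -\widehat{\phi}_{i-1}+2\widehat{\phi}_i-\widehat{\phi}_{i+1}$ (with $\widehat{\phi}_0 = 0$). First I would assemble the dictionary of pullbacks along $i_\pm\circ f$. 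For $1\leqslant i\leqslant n-2$ the power $\wedge^i V$ is already the fundamental representation $V_i$ and $\widehat{p}_i\circ i_\pm = \pi_i$, so $(i_\pm\circ f)^*\widehat{\phi}_i = \varphi_i$ and $(i_\pm\circ f)^*\widehat{\vartheta}_i = \theta_i$, the same for either sign. The three remaining pullbacks are governed by the earlier lemmas: Lemma \ref{n-1 metric} gives $(i_\pm\circ f)^*\widehat{\phi}_{n-1} = \varphi_++\varphi_-$ (identical for both embeddings, since their flags coincide in every term of dimension at most $n-1$); Lemma \ref{metric veronese} makes $(i_\pm\circ f)^*\widehat{\phi}_n$ equal to $2\varphi_\pm$; and the preceding lemma gives the sign-crossing curvature identity $(i_\pm\circ f)^*\widehat{\vartheta}_{n-1} = \theta_\mp$, where $\theta_\pm$ denotes the curvature of $\varphi_\pm$.

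Substituting this dictionary into the first $n-1$ components of \eqref{claccical plucker on grassmannian} I expect three kinds of rows to appear. The components $1,\dots,n-3$ transport verbatim through either embedding and reproduce the tridiagonal rows $\theta_i = -\varphi_{i-1}+2\varphi_i-\varphi_{i+1}$. The component $i=n-2$, pulled back through either embedding, becomes $\theta_{n-2} = -\varphi_{n-3}+2\varphi_{n-2}-(\varphi_++\varphi_-)$, which under the identification $\{\varphi_+,\varphi_-\}=\{\varphi_{n-1},\varphi_n\}$ is exactly the branch row of the $D_n$ matrix. The component $i=n-1$ is the one that splits: pulled back through $i_+\circ f$ it reads $\theta_- = -\varphi_{n-2}+2(\varphi_++\varphi_-)-2\varphi_+ = 2\varphi_--\varphi_{n-2}$, and through $i_-\circ f$ it reads $\theta_+ = 2\varphi_+-\varphi_{n-2}$; these are precisely the two fork rows of the $D_n$ matrix. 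Collecting the $n-3$ tridiagonal rows, the single branch row and the two fork rows yields all $n$ equations $\boldsymbol{\theta} = C\boldsymbol{\varphi}$ with $C$ the Cartan matrix of type $D_n$, which is Givental's formula for $SO(2n,\mathbb{C})$.

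I expect the heart of the argument, and its only real obstacle, to be the fork: showing that the single $A_{2n-1}$ relation at node $n-1$ genuinely splits into the two distinct $D_n$ relations at nodes $n-1$ and $n$. What makes this work is the exact matching of two antisymmetries, and I would take care to display it explicitly. On the metric side the Veronese identity of Lemma \ref{metric veronese} makes $(i_\pm\circ f)^*\widehat{\phi}_n$ equal to $2\varphi_\pm$; on the curvature side the sign-crossing identity of the preceding lemma makes $(i_\pm\circ f)^*\widehat{\vartheta}_{n-1}$ equal to $\theta_\mp$. The crossing in the curvature is precisely what is needed so that, after the cancellation $2(\varphi_++\varphi_-)-2\varphi_\pm = 2\varphi_\mp$, the curvature $\theta_\mp$ is paired with its own metric $\varphi_\mp$, giving the clean rows $\theta_\pm = 2\varphi_\pm-\varphi_{n-2}$. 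A secondary point I would verify is the internal consistency of the lower rows: components $1,\dots,n-2$ must yield the \emph{same} relation through $i_+$ and through $i_-$, which holds because the flags $i_+[b]$ and $i_-[b]$ agree in all terms of dimension at most $n-1$, so every form entering those rows coincides.
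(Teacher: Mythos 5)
Your proposal is correct and follows essentially the same route as the paper: pull back the $A_{2n-1}$ relation on $Fl(V)$ along $i_\pm\circ f$, use Lemma \ref{n-1 metric} for $\widehat{\phi}_{n-1}$, Lemma \ref{metric veronese} for $\widehat{\phi}_n$, and the sign-crossing curvature identity for $\widehat{\vartheta}_{n-1}$ to split the node-$(n-1)$ row into the two fork rows of $D_n$. Your explicit emphasis on the matching of the two antisymmetries at the fork is exactly the cancellation $2(\varphi_++\varphi_-)-2\varphi_\pm=2\varphi_\mp$ that the paper's computation performs.
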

\begin{proof}
 We need only deal with the first $n-1$ components of the vector $\widehat{\boldsymbol{\vartheta}}$. The first $ n-3 $ components of the vector  $\widehat{\boldsymbol{\vartheta}}$ remain unchanged as the pullback is taken with respect to morphisms $i_{\pm}$. Note that the result does not depend on the choice of embedding $i_+$ or $ i_-$.

Consider the $ (n-2) $-th row of the matrix relation (\ref{claccical plucker on grassmannian}):
	\begin{equation}
	\widehat{\vartheta}_{n-2} = -\widehat{\phi}_{n-3}+2\widehat{\phi}_{n-2}-\widehat{\phi}_{n-1}.
	\end{equation}
	Taking into the account formulas ($\ref{segre metric sum}$) and applying pullback to the right- and left-hand sides we have
	\begin{equation}\label{n-2 curvature on grassmannian}
	\theta_{n-2} = (i_+\circ f)^*\widehat{\vartheta}_{n-2} = -(i_+\circ f)^*\widehat{\phi}_{n-3}+ (i_+\circ f)^*2\widehat{\phi}_{n-2}-(i_+\circ f)^*\widehat{\phi}_{n-1}= 
	\end{equation}
	$$
	= -\varphi_{n-3}+2 \varphi_{n-2}-\varphi_+-\varphi_-.
	$$
	Next we consider the $(n-1)$-th component of the curvature vector $ \widehat{\boldsymbol{\vartheta}}$ and the corresponding row of the matrix equality ($\ref{claccical plucker on grassmannian}$):
	\begin{equation}\label{n-1 curvature on grassmannian}
	\widehat{\vartheta}_{n-1} = -\widehat{\phi}_{n-2}+2\widehat{\phi}_{n-1}-\widehat{\phi}_{n}.
	\end{equation}
	In view of Lemma \ref{n-1 metric} and Lemma \ref{metric veronese} we observe that pullback of the right- and left-hand sides of the equality (\ref{n-1 curvature on grassmannian}) via the morphism $i_+\circ f$ yields the desiring equality for the one of the last two rows of Pl\"ucker relations for $SO(2n,\mathbb{C})$:
	
	\begin{equation}
	\theta_- = (i_{+}\circ f)^*(\widehat{\vartheta}_{n-1}) = -(i_{+}\circ f)^*(\widehat{\phi}_{n-2})+ (i_{+}\circ f)^*(2\widehat{\phi}_{n-1})-(i_+\circ f)^*(\widehat{\phi}_{n}) = 
	\end{equation}
	$$
	= -\varphi_{n-2} +2 \varphi_+ +2\varphi_- - 2\varphi_+= -\varphi_{n-2}+2\varphi_{-}.
	$$
	Then the same reasoning as before shows that the remaining row of Pl\"ucker formulas for $SO(2n,\mathbb{C})$ is given by:
	\begin{equation}
	\theta_+ = (i\circ f)^*_{-}(\widehat{\vartheta}_{n-1}) = -(i\circ f)^*_{-}(\widehat{\phi}_{n-2})+ (i\circ f)^*_{-}(2\widehat{\phi}_{n-1})-(i\circ f)^*_-(\widehat{\phi}_{n}) = 
	\end{equation}
	$$
	= -\omega_{n-2} +2 \omega_+ +2\omega_- - 2\omega_-= -\omega_{n-2}+2\omega_{+}.
	$$
	With matrix notation we get the full collection of Pl\"ucker formulas on the curve $S$ for $SO(2n,\mathbb{C})$:
	\begin{equation}
	\boldsymbol{\theta} = A \boldsymbol{\varphi},
	\end{equation}
	where $A$ is a Cartan matrix of type $D_n$.
\end{proof}     

\section{Local Pl\"ucker formulas for the group $ SO (2n + 1, \mathbb{C})$} 
Consider the case $G = SO(2n + 1, \mathbb{C}) $. Let $V$ be a complex vector space of dimension $2n + 1$ with a nondegenerate symmetric bilinear form $Q$ which is preserved by $G$. In this case the exterior power $V_i: = \wedge^i V $ of the standard representation $V$ of $\mathfrak{so}(2n+1,\mathbb{C})$ is the irreducible representation with highest weight $L_1 + \ldots + L_i$ for $i = 1, \dots n$. Exterior powers of standard representation provide us the fundamental representations of the group $G$ only for $i = 1,\dots, n-1$. The remaining fundamental representations $V_n$ with highest weight $\omega_n:=(L_1 + \ldots + L_n)/2$ is called the spin representation and is realized on $S:=\wedge^{\bullet} W$ where $W \subset V $ is the $n$ -dimensional isotropic subspace. As in the case of even orthogonal Lie algebras, we can consider the projective morphism associated with line bundle $ \mathscr{L}_{i}$ on $G/B$ corresponding to the fundamental weight $\omega_i$.

We immediately get the first $(n-2)$-th rows of Pl\"ucker formulas for $SO(2n+1,\mathbb{C})$. We briefly show that two remaining formulas for $ i = n-1, n $ can be reconstructed from classical formulas.

Note that for the odd case a lemma similar to the lemma $ \ref{veronese_lemma} $ is true.
\begin{lemma}\label{veronese_lemma_odd}
	Let $ v: \mathbb{P}(V_n) \to \mathbb{P} (V_n \otimes V_n)) $ be the quadratic Veronese embedding and let  $p_n: G/B \rightarrow \mathbb{P}(\wedge^nV) $ be the the composition of the standard projection to Grassmannian with Pl\"ucker embedding, then there exists a natural linear embedding $ e: \mathbb{P} (\wedge^n V) \to \mathbb{P}(V_n \otimes V_n) $ such that the following diagram is commutative
	\begin{equation}\label{spin diagram}
	\begin{tikzpicture}
	\matrix (m) [matrix of math nodes,row sep=3em,column sep=4em,minimum width=2em]
	{
		\, & \mathbb{P}(V_n) \\
		G/B & \mathbb{P}(V_n\otimes V_n)\\
		\, & \mathbb{P}(\wedge^{n}V)\\
	};
	\path[-stealth]
	(m-2-1) edge node [left] {$\pi_n\,$} (m-1-2)
	(m-1-2) edge node [right] {$v\,\,$} (m-2-2)
	(m-3-2) edge node [right] {$e\,$} (m-2-2)
	(m-2-1) edge node [left]{$p_n$} (m-3-2);
	\end{tikzpicture}
	\end{equation}
	
\end{lemma}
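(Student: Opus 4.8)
The plan is to follow the proof of Lemma \ref{veronese_lemma} almost verbatim, with the single spin representation $V_n$ now playing the role of both half-spin representations $S_{\pm}$. The idea is to realize $\wedge^n V$ as the subrepresentation of $V_n \otimes V_n$ generated by a highest weight vector, which yields the embedding $e$, and then to deduce the commutativity of the diagram from $G$-equivariance together with agreement at the base point $[B]$.

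First I would recall that $\wedge^n V$ is the irreducible $\mathfrak{g}$-module of highest weight $2\omega_n = L_1 + \ldots + L_n$, a highest weight vector being the decomposable tensor $e_1 \wedge \ldots \wedge e_n$. The quadratic Veronese map factors through the symmetric square, and $\mathrm{Sym}^2 V_n$ decomposes into irreducible $\mathfrak{g}$-modules with $\wedge^n V$ as the unique summand of highest weight $2\omega_n$, the remaining summands being lower exterior powers $\wedge^k V$ with $k<n$. The key observation is that, since the highest weight vector $v_n$ of $V_n$ is annihilated by every positive root vector, so is $v_n \otimes v_n$; being of weight $2\omega_n$, this symmetric tensor is therefore a highest weight vector and generates the copy of $\wedge^n V$ sitting inside $V_n \otimes V_n$. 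The resulting $G$-equivariant inclusion of representations $\wedge^n V \hookrightarrow V_n \otimes V_n$ induces the desired natural linear embedding $e : \mathbb{P}(\wedge^n V) \to \mathbb{P}(V_n \otimes V_n)$.

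To establish $v \circ \pi_n = e \circ p_n$, I would argue that both composites are $G$-equivariant morphisms $G/B \to \mathbb{P}(V_n \otimes V_n)$: the maps $\pi_n$ and $p_n$ are equivariant by construction, the Veronese $v$ because $g\cdot[w\otimes w] = [(gw)\otimes(gw)]$, and $e$ as the inclusion of a subrepresentation. Since $G$ acts transitively on $G/B$, it then suffices to compare the two morphisms at the base point $[B]$. There one has $v(\pi_n([B])) = [v_n \otimes v_n]$, while $p_n([B])$ is the Pl\"ucker coordinate $[e_1 \wedge \ldots \wedge e_n]$ of the standard isotropic $n$-plane $\langle e_1,\ldots,e_n\rangle$, and $e$ carries this highest weight vector of $\wedge^n V$ to the highest weight vector $v_n \otimes v_n$ of the corresponding summand; hence both composites equal $[v_n \otimes v_n]$, and by transitivity they agree everywhere.

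The step requiring the most care is this base-point matching, namely that $e$ identifies the Pl\"ucker image $e_1 \wedge \ldots \wedge e_n$ with the Veronese image $v_n \otimes v_n$. This rests on the fact that both are highest weight vectors of weight $2\omega_n$ generating the same irreducible constituent $\wedge^n V$, so that the equivariant embedding must send one to a nonzero scalar multiple of the other, which is all that is needed in projective space. Everything else is a transcription of the even case, so I expect no further obstacle.
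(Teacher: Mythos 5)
Your proposal is correct and follows essentially the same route as the paper: identify $v_n\otimes v_n$ as a highest weight vector of weight $2\omega_n = L_1+\ldots+L_n$ inside the decomposition $V_n\otimes V_n \cong \bigoplus_{k=0}^{n}\wedge^k V$, obtain $e$ from the resulting inclusion of the irreducible summand $\wedge^n V$, and conclude. The paper leaves the $G$-equivariance and base-point matching implicit (``the lemma is an immediate consequence of this fact''), whereas you spell them out; that is a faithful elaboration, not a different argument.
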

\begin{proof}
	The representation $ V_n \otimes V_n $ of $\mathfrak{so}(2n+1,\mathbb{C})$ decomposes into a direct sum
	\begin{equation}
	V_n \otimes V_n = \bigwedge^{n} V \oplus \bigwedge^{n-1} V \oplus \ldots \oplus \bigwedge^0 V.
	\end{equation}
	Let $ v_n $ be the highest weight vector with weight $\omega_n$ of the representation $ V_n $. Next observe that $ v_n \otimes v_n $ is the vector with weight $ L_1 + \ldots + L_n $, i.e. $v_n\otimes v_n$ is the highest weight vector of the irreducible representation  $ \bigwedge^n V $. The lemma is an immediate consequence of this fact.
\end{proof}
The technical statements in the odd dimensional case can be worked out as for an even orthogonal groups.
\begin{lemma}	Let $ \varphi_n $ denote the pullback of the $ FS $ metric on the space $ \mathbb{P}(S)$ via morphims $\pi_n\circ f$ and let $\phi_ {n-1} $ be the metric on the curve $ S $ induced by pullback of the metric $FS$ on the space $ \mathbb{P} (\wedge^{n} V) $ via $p_{n}\circ f$. Then the following equality holds 
	\begin{equation}
	2\varphi_{n} = \phi_{n}. \end{equation}
\end{lemma}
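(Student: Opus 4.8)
The plan is to reproduce in the odd-dimensional setting the argument that proved Lemma~\ref{metric veronese} in the even case, the only inputs being the commutative diagram (\ref{spin diagram}) supplied by Lemma~\ref{veronese_lemma_odd} and the metric behaviour of the quadratic Veronese embedding. By definition $\varphi_n = (\pi_n \circ f)^* FS$ and $\phi_n = (p_n \circ f)^* FS$, so the asserted identity $2\varphi_n = \phi_n$ is a statement about the two routes from $G/B$ into $\mathbb{P}(V_n \otimes V_n)$ recorded by that diagram.

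First I would put on $\mathbb{P}(V_n \otimes V_n)$ the Fubini--Study metric induced by the tensor square of the $K$-invariant Hermitian inner product on $V_n$, and observe that the quadratic Veronese map $v \colon \mathbb{P}(V_n) \to \mathbb{P}(V_n \otimes V_n)$ factors as $v = \sigma \circ \Delta$, with $\Delta \colon \mathbb{P}(V_n) \to \mathbb{P}(V_n) \times \mathbb{P}(V_n)$ the diagonal and $\sigma$ the Segre embedding. Since the two projections composed with $\Delta$ are both the identity, Lemma~\ref{metric segre} gives $v^* FS = 2\,FS$. The diagram (\ref{spin diagram}) yields the equality of morphisms $v \circ \pi_n = e \circ p_n$, so pulling $FS$ back and using $v^* FS = 2\,FS$ produces the chain
\[
2\varphi_n = (v \circ \pi_n \circ f)^* FS = (e \circ p_n \circ f)^* FS = (p_n \circ f)^* FS = \phi_n ,
\]
provided one knows that the linear embedding $e$ satisfies $e^* FS = FS$.

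The step I expect to be the main obstacle is precisely this last claim, that $e \colon \mathbb{P}(\wedge^n V) \to \mathbb{P}(V_n \otimes V_n)$ is an isometry for the respective Fubini--Study metrics. It is not automatic: the metric on $\mathbb{P}(\wedge^n V)$ is built from the intrinsic $K$-invariant Hermitian form on $\wedge^n V$, whereas the embedding sees $\wedge^n V$ with the form restricted from the tensor product, and a priori these agree only up to a positive scalar. The resolution is that $\wedge^n V$ is an irreducible $\mathfrak{g}$-module --- it is the summand of highest weight $L_1 + \cdots + L_n = 2\omega_n$ in the decomposition of Lemma~\ref{veronese_lemma_odd} --- so by uniqueness of the $K$-invariant Hermitian form on an irreducible representation up to a constant, the two forms differ by a scalar that leaves the Fubini--Study metric unchanged; hence $e^* FS = FS$. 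Finally, to read the displayed identities as equalities of $(1,1)$-forms on $S$ I would invoke the general criterion stated earlier (the unlabelled lemma preceding Lemma~\ref{metric segre}): nondegeneracy of $f$ makes the relevant differentials injective, so each pullback of a Fubini--Study form is the associated form of the induced metric on the curve, and the equality $2\varphi_n = \phi_n$ follows.
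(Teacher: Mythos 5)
Your proposal is correct and matches the paper's intended argument: the paper omits an explicit proof of this lemma, stating only that the odd-dimensional case "can be worked out as for an even orthogonal group," and the proof it points to (that of Lemma~\ref{metric veronese}) is exactly the factorization $v = \sigma \circ \Delta$ combined with Lemma~\ref{metric segre} and the commutative diagram from Lemma~\ref{veronese_lemma_odd} that you give. Your additional justification that the linear embedding $e$ is an isometry, via irreducibility of $\wedge^n V$ and uniqueness of the $K$-invariant Hermitian form up to scalar, is a point the paper leaves implicit, but it does not change the route.
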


Combining the previous two lemmas, we can now formulate the orthogonal version of Pl\"ucker formulas for odd orthogonal groups. 

\begin{theorem}
The local Pl\"ucker formulas for the group $SO(2n+1, \mathbb{C})$ are obtained by taking pullback of the classical Pl\"ucker formulas on $Fl(V)$ via the map $i\circ f$ of the first $n$ components of the vector $ \widehat{\boldsymbol{\vartheta}} $.
\end{theorem}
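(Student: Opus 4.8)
The plan is to transcribe the even-dimensional argument, using that $\dim V = 2n+1$ makes the ambient group $SL(2n+1,\mathbb{C})$ of type $A_{2n}$. The classical Pl\"ucker relation on $Fl(V)$ then reads $\widehat{\boldsymbol{\vartheta}} = C\widehat{\boldsymbol{\phi}}$ with $C$ the $A_{2n}$ Cartan matrix, so that $\widehat{\vartheta}_i = -\widehat{\phi}_{i-1}+2\widehat{\phi}_i-\widehat{\phi}_{i+1}$ for $1 \leqslant i \leqslant 2n$. I would take the first $n$ rows and apply $(i\circ f)^{*}$, using throughout that curvature pulls back functorially, $(i\circ f)^{*}\theta(\mathcal{N}_i) = \theta((i\circ f)^{*}\mathcal{N}_i)$, via the formula $\theta(\mathscr{L}) = -\partial\overline{\partial}\ln\|l\|$. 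For $i = 1,\dots,n-1$ the exterior power $\wedge^i V$ is the $i$-th fundamental representation, hence $\widehat{p}_i\circ i = \pi_i$ and $(i\circ f)^{*}\widehat{\phi}_i = \varphi_i$; consequently the first $n-2$ rows of the pulled-back relation remain unchanged and are exactly the first $n-2$ rows of the desired $B_n$ formula.

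For the $(n-1)$-th row I would begin from $\widehat{\vartheta}_{n-1} = -\widehat{\phi}_{n-2}+2\widehat{\phi}_{n-1}-\widehat{\phi}_n$. The only nonclassical term is $\widehat{\phi}_n$: since $\widehat{p}_n\circ i = p_n$, the metric lemma above (asserting $2\varphi_n = \phi_n$) gives $(i\circ f)^{*}\widehat{\phi}_n = \phi_n = 2\varphi_n$. Substituting yields $\theta_{n-1} = -\varphi_{n-2}+2\varphi_{n-1}-2\varphi_n$, which is precisely the $(n-1)$-th row of the Cartan matrix of type $B_n$, carrying the entry $-2$ of the double bond.

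The hard part is the $n$-th row $\widehat{\vartheta}_n = -\widehat{\phi}_{n-1}+2\widehat{\phi}_n-\widehat{\phi}_{n+1}$, which involves $\widehat{\phi}_{n+1}$---a term with no fundamental-representation counterpart and absent from the even case, where the computation stopped at row $n-1$. The key point I would establish is that along the image of $i$ the $(n+1)$-dimensional term of the flag is forced to equal $W_n^{\perp}$, the $Q$-orthogonal complement of the isotropic term $W_n = \langle e_1,\dots,e_n\rangle$; concretely $W_n^{\perp} = \langle e_1,\dots,e_n,w\rangle$, where $w$ spans the weight-zero line of $V$. The nondegenerate form $Q$ induces a $G$-equivariant isomorphism $\wedge^{n+1}V \cong \wedge^n V$---both are irreducible of highest weight $L_1+\dots+L_n$, with highest weight vectors $e_1\wedge\dots\wedge e_n\wedge w$ and $e_1\wedge\dots\wedge e_n$ respectively---and by $K$-invariance of the Hermitian forms together with irreducibility it descends to an isometry of projectivizations. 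Under this isometry $\widehat{p}_{n+1}\circ i$ is carried onto $\widehat{p}_n\circ i = p_n$, whence $(i\circ f)^{*}\widehat{\phi}_{n+1} = (p_n\circ f)^{*}FS = \phi_n = 2\varphi_n$ as well.

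Finally I would substitute both evaluations into the $n$-th row to obtain $\theta_n = -\varphi_{n-1}+2(2\varphi_n)-2\varphi_n = -\varphi_{n-1}+2\varphi_n$, the last row of the $B_n$ Cartan matrix, carrying the entry $-1$ of the double bond. Assembling the $n$ rows gives $\boldsymbol{\theta} = A\boldsymbol{\varphi}$ with $A$ of type $B_n$, as claimed. I expect the Hodge-style identification $(i\circ f)^{*}\widehat{\phi}_{n+1} = 2\varphi_n$ to be the only genuinely new ingredient; the remaining steps repeat the even-dimensional computation verbatim.
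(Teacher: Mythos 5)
Your proposal is correct and follows exactly the reduction-to-$A_{2n}$ route that the paper sets up with its two preceding lemmas (the Veronese/linear-embedding lemma and the identity $2\varphi_n=\phi_n$); the paper actually states this theorem without writing out a proof body, so you are supplying the argument it leaves implicit. In particular, the one step the paper never addresses --- evaluating $(i\circ f)^{*}\widehat{\phi}_{n+1}$ in the $n$-th row --- you resolve correctly via the self-duality of the isotropic flag and the $G$-equivariant identification $\wedge^{n+1}V\cong\wedge^{n}V$ (both irreducible of highest weight $L_1+\dots+L_n$, with the $K$-invariant Hermitian forms matching up to an irrelevant scalar), yielding $(i\circ f)^{*}\widehat{\phi}_{n+1}=\phi_n=2\varphi_n$ and hence the last two rows $(-1,2,-2)$ and $(-1,2)$ of the $B_n$ Cartan matrix.
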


\end{document}